%****************
% Document class
%****************
\documentclass[numbook,envcountsame]{svjour3}

%**********
% Packages
%**********
\usepackage{dpr_fec}
\allowdisplaybreaks

%**********
% Commands
%**********

\newcommand{\etasuccessful}{\eta}

\newcommand{\TRk}{\delta_{k}}
\newcommand{\TRnext}{\delta_{k+1}}
\newcommand{\norm}[1]{\|#1\|}
\newcommand{\s}[1]{^{#1}}
\newcommand{\xnext}{x_{k+1}}

\newcommand{\eps}{\epsilon}
\newcommand{\gammaLB}{\underline\gamma}
\newcommand{\gammaUB}{\bar\gamma}

\newcommand{\gammamin}{\gamma_{\min}}
\newtheorem{assumption}[theorem]{Assumption}
\newtheorem{TRchoice}[theorem]{Update}

%*********
% Options
%*********
\smartqed

%**********
% Header
%**********
\title{Concise Complexity Analyses for Trust-Region Methods}
\titlerunning{Concise Complexity Analyses for Trust-Region Methods}
\author{\makebox[\linewidth][l]{Frank E. Curtis \and Zachary Lubberts \and Daniel P. Robinson}}
\authorrunning{F. E. Curtis and Z. Lubberts and D.~P.~Robinson}
\institute{
Frank E. Curtis \at Dept. of Industrial and Systems Engineering, Lehigh University, Bethlehem, PA, USA. \\
\email{\href{mailto:frank.e.curtis@gmail.com}{frank.e.curtis@gmail.com}} 
\and
Zachary Lubberts \at Dept. of Applied Mathematics and Statistics, Johns Hopkins University, Baltimore, MD, USA. \\ \email{\href{mailto:zlubber1@jhu.edu }{zlubber1@jhu.edu}} 
\and
Daniel P. Robinson \at Dept. of Applied Mathematics and Statistics, Johns Hopkins University, Baltimore, MD, USA. \\  \email{\href{mailto:daniel.p.robinson@gmail.com}{daniel.p.robinson@gmail.com}}   
}
\date{\today}

%**********
% Document
%**********
\begin{document}

\maketitle

%**********
% Abstract
%**********
\begin{abstract}
Concise complexity analyses are presented for simple trust region algorithms for solving unconstrained optimization problems. In contrast to a traditional trust region algorithm, the algorithms considered in this paper require certain control over the choice of trust region radius after any successful iteration.  The analyses highlight the essential algorithm components required to obtain certain complexity bounds.  In addition, a new update strategy for the trust region radius is proposed that offers a second-order complexity bound.

\keywords{unconstrained optimization \and nonlinear optimization \and nonconvex optimization \and trust region methods \and global convergence \and worst-case iteration complexity \and worst-case evaluation complexity}

\subclass{49M15 \and 49M37 \and 58C15 \and 65K05 \and 65K10 \and 65Y20 \and 68Q25 \and 90C30 \and 90C60}

\end{abstract}

%*********
% Section
%*********
\section{Introduction}\label{sec.introduction}

We analyze a trust region framework for solving the smooth optimization problem
\begin{equation} \label{prob}
\min_{x\in\R{n}} \ f(x),
\end{equation}
where $f:\R{n} \to \R{}$.  Since trust region methods have been extensively studied and analyzed, let us immediately discuss the contributions of this paper. 

\subsection{Contributions}

We show in Section~\ref{sec.first} that explicitly connecting the trust region radius with the norm of the gradient of $f$ allows for a concise first-order complexity analysis of a trust region method. This method is, in fact, a specific instance of the general framework considered in~\cite{FanYuan01,grapiglia2015convergence,toint2013nonlinear}.  Our analysis, however, is simpler since we do not consider such a general framework.  In addition, to the best of our knowledge, our presentation is the first to highlight the essential aspects needed to obtain a first-order complexity bound. In Section~\ref{sec.second}, we propose a new update strategy for the trust region radius that allows one to obtain a second-order complexity bound.  In Section~\ref{sec.ye}, we contrast the aforementioned strategies with one that offers better complexity bounds, but has some practical disadvantages.

\subsection{Notation and Assumption about the Objective Function}

We use $\R{}$ to denote the set of real numbers, $\R{}_{+}$ (respectively $\R{}_{++}$) to denote the set of nonnegative (respectively positive) real numbers, $\R{n}$ to denote the set of $n$-dimensional real vectors, and $\R{m \times n}$ to denote the set of $m$-by-$n$-dimensional real matrices.  The set of natural numbers is denoted as $\N{} := \{0,1,2,\dots\}$.  

We denote $g := \nabla f : \R{n} \to \R{}$ and $H := \nabla^2 f : \R{n} \to \R{n \times n}$.  For all $k\in\N{}$, we let $x_k$ denote the $k$th iteration computed by the trust region method. For brevity, we append $k \in \N{}$ as a subscript to a function to denote its value at the $k$th iterate $x_k$, e.g., $f_k = f(x_k)$, $g_k = g(x_k)$, and $H_k := H(x_k)$. Given $H_k$, we let  $\lambda_k := \lambda_k(H_k)$ denote the leftmost eigenvalue of $H_k$, and $(\lambda_k)_- := \min\{0,\lambda_k\}$, i.e., $(\lambda_k)_-$ is the negative part of $\lambda_k$. Finally, for $v\in\R{n}$, we use $\|v\|$ to denote the two-norm of $v$.

The following assumption on the objective function is made throughout.
\begin{assumption}\label{ass.main}
The function $f : \R{n} \to \R{}$ is twice continuously differentiable with Hessian function $H$ being Lipschitz continuous, i.e., there exists a constant $L\in\R{}_{++}$ such that $\|H(x) - H(y)\| \leq L \|x-y\|$ for all $(x,y)\in\R{n}\times\R{n}$. In addition, the function~$f$ is bounded below, 
i.e., there exists $f_{\inf} \in \R{}$ such that $f(x) \geq f_{\inf}$ for all $x\in\R{n}$.
\end{assumption}

%*********
% Section
%*********
\section{A Trust Region Algorithm} 

In this section, we present and analyze trust region algorithms, the framework for which is formally stated in Section~\ref{sec.alg.1}. The key results needed to perform most of our complexity analyses are stated and proved in Section~\ref{sec.analysis.1}. Using these key results, we establish a first-order complexity result in Section~\ref{sec.first} and a second-order complexity result in Section~\ref{sec.second}.  Specific strategies employed to obtain these complexity results are stated in each subsection.  We end by considering an instance with a fixed trust region radius in Section~\ref{sec.ye} that has advantages and disadvantages vis-\`a-vis these other strategies.

\subsection{The Algorithm}\label{sec.alg.1}

We study the trust region method stated as Algorithm~\ref{alg-gtr}.  During the $k$th iteration, given a trust region radius $\delta_k\in\R{}_{++}$, the algorithm computes an approximate solution $s_k\in\R{n}$ to the trust region subproblem
\begin{equation}\label{def.Bmod}
\min_{s\in\R{n}} \ \left\{ m_k(s) := f_k + g_k^T s + \thalf s^T H_k s \right\} \ \ 
\st \ \ \|s\| \leq \delta_k.
\end{equation}
To facilitate a unified first- and second-order complexity analysis, when $\lambda_k < 0$ we allow in Step~\ref{step.delta.1} the radius to be set as either $\TRk \gets \gamma_k \|g_k\|$ or $\TRk \gets \gamma_k|(\lambda_k)_-|$. %(In Section~\ref{sec.first} we give a strategy that leads to a first-order complexity result, and in Section~\ref{sec.second} we give a strategy that leads to a second-order complexity result.)
For this reason, it will be convenient for our analyses to refer to the index sets
\begin{align*}
\Kcal_g &:= \{k\in\N{}: \ \delta_k \gets \gamma_k \|g_k\| \ \text{in either Step~\ref{step.delta.1} or Step~\ref{step.delta.2}}\} \\ 
\text{and} \ \ \Kcal_H &:= \{k\in\N{}: \ \delta_k \gets \gamma_k |(\lambda_k)_-| \ \text{in Step~\ref{step.delta.1}}\}.
\end{align*}
Note that $\lambda_k < 0$ for all $k\in\Kcal_H$ due to Step~\ref{step.lam.neg}.
We define an approximate solution to~\eqref{def.Bmod} in terms of a Cauchy point that applies to our setting,
%An approximate solution of~\eqref{def.Bmod} is acceptable if it satisfies a condition involving a Cauchy point that applies to our setting, 
i.e., a vector that ensures that the model $m_k$ is sufficiently reduced. Specifically, if $u_k$
denotes a unit eigenvector corresponding to $\lambda_k$ scaled by $\pm1$ so that $g_k^Tu_k \leq 0$, then with
\begin{equation}\label{def.v}
v_k := 
\begin{cases}
-g_k & \text{if $k\in\Kcal_g$} \\
\phantom{-} u_k & \text{if $k\in\Kcal_H$,}
\end{cases}
\end{equation}
the Cauchy point $s_k\s{c}$ is defined as
\begin{equation}\label{new.cauchy}
s_k\s{c} := t_k v_k,
 \ \text{where} \
t_k := 
\argmin{t\geq 0} \ m_k(tv_k) \ \st\ \|tv_k\| \leq \delta_k.
\end{equation} 
We say that any $s_k$ satisfying $m_k(s_k) \leq m_k(s_k^c)$ and 
$\|s_k\| \leq \delta_k$ is a valid choice for an approximate solution to problem~\eqref{def.Bmod}.

\begin{algorithm}[b]
  \caption{Trust region algorithm}
  \label{alg-gtr}
  \begin{algorithmic}[1]
    \State Input an initial estimate $x_0\in\R{n}$ of a solution to~\eqref{prob}.
    \State Choose parameters $(\gamma_c,\etasuccessful) \in (0,1) \times (0,1)$ and $0 < \gammaLB \leq \gammaUB < \infty$. 
    \State Choose $\gamma_0 \in[\gammaLB,\gammaUB]$.
    \For{$k = 0,1,2,\dots$}
       \If{$\lambda_k < 0$} \label{step.lam.neg}
          \State Set either $\TRk \gets \gamma_k \|g_k\|$ or $\TRk \gets \gamma_k|(\lambda_k)_-|$. \label{step.delta.1}
       \Else
          \State Set $\TRk \gets \gamma_k \|g_k\|$. \label{step.delta.2}
       \EndIf
       \State Find any trial step $s_k$ that satisfies $\norm{s_k} \leq \TRk$ and $m_k(s_k) \leq m_k(s_k\s{c})$. \label{step.requirements}
       \State Set\label{step:rho}
       $$
       \rho_k
       \gets  \frac{f_k - f(x_k + s_k)}{m_k(0) - m_k(s_k)}.
       $$
       \If{$\rho_k \geq \etasuccessful$,}
       \State\label{TR-inc-vs} Set $\xnext \gets x_k + s_k$ and choose any $\gamma_{k+1} \in [\gammaLB,\gammaUB]$.
       $\Comment{\text{successful}}$
      \Else
        \State Set $\xnext \gets x_k$ and $\gamma_{k+1} \gets \gamma_c \gamma_k$. $\Comment{\text{unsuccessful}}$ \label{step.U}
      \EndIf
    \EndFor
  \end{algorithmic}
\end{algorithm}

The updates for setting $x_{k+1}$ and $\gamma_{k+1}$ depend on the ratio of the decrease in~$f$ to the decrease predicted by the model $m_k$, as denoted by $\rho_k$ in Step~\ref{step:rho}. If $\rho_k$ is larger than a pre-specified value $\etasuccessful\in(0,1)$, then $x_{k+1} \gets x_k + s_k$ and any value for $\gamma_{k+1}$ satisfying $\gamma_{k+1}  \in [\gammaLB,\gammaUB]$ with $0 < \gammaLB \leq \gammaUB < \infty$ may be used.  In this event, the iteration is said to be \emph{successful}. On the other hand, if $\rho_k < \etasuccessful$, then $x_{k+1} \gets x_k$ and $\gamma_{k+1} \gets \gamma_c \gamma_k$ with $\gamma_c \in (0,1)$; the iteration is said to be \emph{unsuccessful}. It will be helpful for the analysis to define the sets of successful and unsuccessful iterations: 
\begin{equation} \label{def.SandU}
\Scal := \{k\in\N{}: \rho_k \geq \eta\} 
\ \ \text{and} \ \ 
\Ucal := \{k\in\N{}: \rho_k < \eta\}. 
\end{equation}

We now turn to the key results that are needed in our analyses.

%*********
% Section
%*********
\subsection{Key Results Needed for Complexity Analyses}\label{sec.analysis.1}

We start by making the following assumption on the iterates.
\begin{assumption}\label{ass.main.H}
The Hessian function $H$ is uniformly bounded over the sequence of iterates, i.e., for some $\kappa\in\R{}_{+}$ and all $k\in\N{}$, it holds that $\|H_k\| \leq \kappa$.
\end{assumption}

Note that Assumption~\ref{ass.main.H} is implied by Assumption~\ref{ass.main} any time $\{x_k\}$ is contained in a bounded set, which is a common assumption used in some analyses.

Our first result gives the decrease in $m_k$ guaranteed by the Cauchy point.

\begin{lemma} \label{lem-achievable}
For all $k \in \N{}$, the trial step $s_k$ satisfies
\begin{equation} \label{lem.mod.dec}
m_k(0) - m_k(s_k)
\geq
\begin{cases}
\thalf\min\big\{(1+\kappa)^{-1},\gamma_k\big\} \|g_k\|^2 & \text{if $k\in\Kcal_g$,} \\
\thalf \gamma_k^2 |(\lambda_k)_-|^3 & \text{if $k\in\Kcal_H$.}
\end{cases}
\end{equation}
\end{lemma}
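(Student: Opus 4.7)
The plan is to lean on the requirement $m_k(s_k) \leq m_k(s_k\s{c})$ from Step~\ref{step.requirements}, which reduces the task to bounding $m_k(0) - m_k(s_k\s{c})$ from below in the two regimes that define $v_k$ in~\eqref{def.v}. In both regimes $s_k\s{c}$ is the minimizer of a one-dimensional quadratic along $v_k$ on a known interval, so everything reduces to inspecting one-variable quadratics.

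For $k \in \Kcal_g$, I would substitute $v_k = -g_k$ and $\delta_k = \gamma_k\|g_k\|$ into~\eqref{new.cauchy}. The feasible set then becomes $t \in [0,\gamma_k]$ and the univariate objective is $m_k(-tg_k) = f_k - t\|g_k\|^2 + \thalf t^2 q_k$ with $q_k := g_k^T H_k g_k$. I plan to split on the sign of $q_k$ and, when $q_k > 0$, on whether the unconstrained minimizer $t^\star := \|g_k\|^2/q_k$ lies in $[0,\gamma_k]$. When $q_k \leq 0$ the univariate model is nonincreasing on $[0,\gamma_k]$, so $t_k = \gamma_k$ and the decrease is at least $\gamma_k\|g_k\|^2$. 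When $q_k > 0$ and $t^\star \leq \gamma_k$, the Cauchy point is interior, yielding a decrease of $\thalf\|g_k\|^4/q_k$; combining with $q_k \leq \kappa\|g_k\|^2$ from Assumption~\ref{ass.main.H} gives at least $\thalf\|g_k\|^2/(1+\kappa)$. When $q_k > 0$ and $t^\star > \gamma_k$ the constraint is active, so $t_k = \gamma_k$ and using $q_k < \|g_k\|^2/\gamma_k$ the decrease strictly exceeds $\thalf\gamma_k\|g_k\|^2$. Taking the weakest of the three bounds produces $\thalf\min\{(1+\kappa)^{-1},\gamma_k\}\|g_k\|^2$.

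For $k \in \Kcal_H$, I would use $v_k = u_k$ with $\|u_k\| = 1$, $H_k u_k = \lambda_k u_k$, $g_k^T u_k \leq 0$, and $\delta_k = -\gamma_k\lambda_k$. Along $u_k$ the model collapses to $m_k(t u_k) = f_k + t g_k^T u_k + \thalf t^2 \lambda_k$, whose derivative $g_k^T u_k + t\lambda_k$ is nonpositive for every $t \in [0,\delta_k]$ since both $g_k^T u_k$ and $\lambda_k$ are nonpositive and $t \geq 0$. Hence the constrained minimum is attained at the boundary $t_k = \delta_k$, and dropping the nonnegative term $-\delta_k g_k^T u_k$ yields $m_k(0) - m_k(s_k\s{c}) \geq -\thalf\delta_k^2\lambda_k = \thalf\gamma_k^2|(\lambda_k)_-|^3$, which is the required bound.

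The main obstacle I anticipate is the bookkeeping in the $\Kcal_g$ case: the three subcases must be reconciled into the single expression $\thalf\min\{(1+\kappa)^{-1},\gamma_k\}\|g_k\|^2$, which motivates weakening the sharper constant $1/\kappa$ to $1/(1+\kappa)$ so that the bound remains valid when $\kappa$ is small (and is not singular if $\kappa = 0$). The $\Kcal_H$ case is essentially a single line because the gradient term in the one-dimensional model can only help, and the Lipschitz constant $L$ of Assumption~\ref{ass.main} is not needed here.
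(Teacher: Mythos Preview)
Your proof is correct and follows essentially the same approach as the paper. For $k\in\Kcal_H$ your argument is identical to the paper's; for $k\in\Kcal_g$ the paper simply invokes the classical Cauchy-decrease bound $m_k(0)-m_k(s_k^c)\geq\thalf\|g_k\|\min\{\|g_k\|/(1+\|H_k\|),\delta_k\}$ from \cite[Theorem~6.3.1]{ConGT00a}, whereas your three-subcase analysis on the sign of $q_k$ and the location of $t^\star$ is precisely the proof of that cited result, so the only difference is that you unpack the citation.
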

\begin{proof}
First, suppose that $k\in\Kcal_g$, which implies that $v_k = -g_k$ in the definition of the Cauchy point in~\eqref{new.cauchy}. In this case, it follows from the decrease guaranteed by the Cauchy point~\cite[Theorem~6.3.1]{ConGT00a}, $\TRk \gets \gamma_k\|g_k\|$, and Assumption~\ref{ass.main.H} that
\begin{equation*} 
m_k(0) - m_k(s_k^c)
\geq \thalf \| g_k \| \min\left\{ \frac{\|g_k\|}{1
 + \|H_k\|}, \TRk \right\}
\geq \thalf\min\left\{\frac{1}{1+\kappa},\gamma_k\right\} \|g_k\|^2.
\end{equation*}
The result~\eqref{lem.mod.dec} follows from this fact and $m_k(s_k) \leq m_k(s_k^c)$, as required in Step~\ref{step.requirements}. 

Second, suppose that $k\in\Kcal_H$, which implies that $v_k = u_k$ in the definition of the Cauchy point in~\eqref{new.cauchy}; recall that $u_k$ denotes a unit eigenvector of $H_k$ corresponding to $\lambda_k$ scaled by $\pm1$ so that $g_k^Tu_k \leq 0$.
By combining this with $k\in\Kcal_H$ so that $\delta_k \gets \gamma_k|(\lambda_k)_-|$ with $\lambda_k < 0$, \eqref{def.v}, and~\eqref{new.cauchy} we obtain
\bequationNN
  \baligned
    &\ \min_{t\geq0}\ m_k(t u_k)\ \st\ \|t u_k\|_2 \leq \delta_k = \gamma_k |(\lambda_k)_-| \\
    =&\ \min_{t\geq0}\ f_k + g_k^T(t u_k) + \thalf (t u_k)^T H_k(t u_k)\ \st\ t \leq \gamma_k |(\lambda_k)_-| \\
    =&\ \min_{t\geq0}\ f_k + t g_k^Tu_k + \thalf t^2 \lambda_k\ \st\ t \leq \gamma_k |(\lambda_k)_-|.
  \ealigned
\eequationNN
Since $g_k^Tu_k \leq 0$ and $\lambda_k < 0$, the minimum occurs at $t_k = \gamma_k |(\lambda_k)_-|$, which combined with $m_k(x_k) \leq m_k(s_k\s{c})$ in Step~\ref{step.requirements} of Algorithm~\ref{alg-gtr} yields
\begin{align*}
m_k(s_k)
&\leq m_k(s_k\s{c}) \\
&= \min_{t\geq0}\ m_k(t u_k)\ \st\ \|t u_k\|_2 \leq \delta_k = \gamma_k |(\lambda_k)_-| \\ 
&= m_k(t_ku_k) 
      = f_k - \gamma_k\lambda_k g_k^Tu_k - \thalf \gamma_k^2 |(\lambda_k)_-|^3 \\
      &\leq f_k - \thalf \gamma_k^2|(\lambda_k)_-|^3
      = m_k(0) - \thalf \gamma_k^2|(\lambda_k)_-|^3.
\end{align*}
Hence, the reduction in $m_k$ obtained by $s_k$ is bounded as in~\eqref{lem.mod.dec}. \qed
\end{proof}

We now bound the difference between the objective function and its model.

\begin{lemma} \label{lem-difference}
For all $k\in\N{}$, the error in the model $m_k$ at $s_k$  can be bounded as
$$
|\,f(x_k^{ } +s_k^{ }) - m_k^{ }(s_k^{ })| 
\leq 
\begin{cases}
\kappa \gamma_k^2 \|g_k\|^2 & \text{if $k\in\Kcal_g$,} \\
\tfrac16 L \gamma_k^3|(\lambda_k)_-|^3 & \text{if $k\in\Kcal_H$.}
\end{cases}
$$
\end{lemma}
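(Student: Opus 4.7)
The plan is to use Taylor's theorem with integral remainder to write
$$f(x_k^{ } + s_k^{ }) - m_k^{ }(s_k^{ }) = \int_0^1 (1-t)\, s_k^T\bigl(H(x_k+ts_k) - H_k\bigr) s_k\,dt,$$
which follows immediately from $f \in C^2$ (Assumption~\ref{ass.main}) after the $f_k$ and $g_k^T s_k$ terms cancel against the corresponding parts of $m_k(s_k)$. From this identity the two cases $k \in \Kcal_g$ and $k \in \Kcal_H$ will be handled by a single inequality on the integrand together with the appropriate trust-region bound on $\|s_k\|$.

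For $k \in \Kcal_H$, I would use the Lipschitz continuity of $H$ from Assumption~\ref{ass.main} to majorize the inner norm by $Lt\|s_k\|$. Combined with $\int_0^1 t(1-t)\,dt = \tfrac{1}{6}$, this yields $|f(x_k+s_k) - m_k(s_k)| \leq \tfrac{L}{6}\|s_k\|^3$, which together with $\|s_k\| \leq \delta_k = \gamma_k|(\lambda_k)_-|$ (Step~\ref{step.delta.1}) gives the advertised $\tfrac{L}{6}\gamma_k^3 |(\lambda_k)_-|^3$.

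For $k \in \Kcal_g$, the same Lipschitz estimate would instead produce $\|g_k\|^3$ in place of $\|g_k\|^2$, so I would switch to the uniform Hessian bound: by Assumption~\ref{ass.main.H} and the triangle inequality, $\|H(x_k+ts_k) - H_k\| \leq 2\kappa$, whence $|f(x_k+s_k) - m_k(s_k)| \leq \kappa \|s_k\|^2$ using $\int_0^1(1-t)\,dt = \tfrac{1}{2}$. Then $\|s_k\| \leq \delta_k = \gamma_k\|g_k\|$ (Steps~\ref{step.delta.1}--\ref{step.delta.2}) yields exactly $\kappa \gamma_k^2 \|g_k\|^2$.

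The only real subtlety is the deliberate choice of which hypothesis to invoke in each branch: the Lipschitz constant $L$ for $\Kcal_H$, but only the uniform bound $\kappa$ for $\Kcal_g$. Strictly speaking, applying $\|H(\cdot)\| \leq \kappa$ at the intermediate point $x_k+ts_k$ requires a mild extension of Assumption~\ref{ass.main.H} off of the iterate sequence; this is automatic whenever $\{x_k\}$ lies in a bounded set, which is precisely the regime flagged by the authors immediately after stating Assumption~\ref{ass.main.H}. Everything else reduces to the two elementary integrals already noted, so the computation itself is routine once the identity above is in hand.
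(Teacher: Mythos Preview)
Your argument is correct and is essentially the approach underlying the paper's proof: the paper simply cites the standard model-error bounds \cite[Theorems~6.4.1 and~3.1.5]{ConGT00a}, whose proofs proceed exactly via the Taylor integral remainder you wrote down, followed by the respective bounds $\|H(\cdot)-H_k\|\le 2\kappa$ or $\|H(\cdot)-H_k\|\le Lt\|s_k\|$ and the trust-region constraint $\|s_k\|\le\delta_k$. Your observation about Assumption~\ref{ass.main.H} being stated only at the iterates is apt and applies equally to the paper's citation-based argument; it is indeed harmless under the bounded-iterate regime the authors mention.
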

\begin{proof}
If $k\in\Kcal_g$, the result follows from~\cite[Theorem~6.4.1]{ConGT00a}, Assumption~\ref{ass.main.H}, and the fact that $\TRk \gets \gamma_k\|g_k\|$ for $k\in\Kcal_g$. 
If $k\in\Kcal_H$, the result follows from Assumption~\ref{ass.main},  \cite[Theorem~3.1.5]{ConGT00a}, and the fact that $\TRk \gets \gamma_k|(\lambda_k)_-|$ for $k\in\Kcal_H$. \qed
\end{proof}

We can now give a uniform lower bound on $\{\gamma_k\}$ that is independent of $k$.

\begin{lemma} \label{lem-progress}
For all $k\in\N{}$, it holds that
\begin{equation} \label{gamma.lb}
\gamma_k \geq \gammamin := \min\left\{\gammaLB, \frac{\gamma_c}{1+\kappa}, \frac{\gamma_c(1-\etasuccessful)}{2\kappa}, \frac{3\gamma_c(1-\eta)}{L}\right\} \in (0,1).
\end{equation}
\end{lemma}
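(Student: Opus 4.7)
The plan is to induct on $k$, exploiting the fact that $\{\gamma_k\}$ can only decrease by the factor $\gamma_c$ on unsuccessful iterations, while successful iterations reset $\gamma_{k+1}$ into $[\gammaLB,\gammaUB]$. The base case follows since $\gamma_0 \in [\gammaLB,\gammaUB] \subseteq [\gammamin,\gammaUB]$. For the inductive step with $\gamma_k \geq \gammamin$, the case $k \in \Scal$ gives $\gamma_{k+1} \geq \gammaLB \geq \gammamin$ immediately, so only $k \in \Ucal$ requires work, and for that case the goal reduces to showing $\gamma_c \gamma_k \geq \gammamin$.

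For the unsuccessful case I would use the standard ``unsuccessful $\Rightarrow$ $\gamma_k$ is already large'' argument. Since $\rho_k < \eta$ and the denominator $m_k(0) - m_k(s_k)$ is strictly positive by Lemma~\ref{lem-achievable} (in the only nontrivial situation, where $\|g_k\|>0$ for $k\in\Kcal_g$ and $|(\lambda_k)_-|>0$ for $k\in\Kcal_H$), the identity $m_k(0) = f_k$ rearranges $\rho_k < \eta$ into
\begin{equation*}
(1-\eta)\bigl(m_k(0) - m_k(s_k)\bigr) < f(x_k+s_k) - m_k(s_k) \leq |f(x_k+s_k) - m_k(s_k)|.
\end{equation*}
Then I would insert the Lemma~\ref{lem-achievable} lower bound on the left and the Lemma~\ref{lem-difference} upper bound on the right, splitting according to whether $k \in \Kcal_g$ or $k \in \Kcal_H$.

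For $k \in \Kcal_g$, dividing through by $\|g_k\|^2>0$ produces $\tfrac12(1-\eta)\min\{1/(1+\kappa),\gamma_k\} < \kappa\gamma_k^2$. A dichotomy on the minimum gives either $\gamma_k \geq 1/(1+\kappa)$ directly, or (after dividing by $\gamma_k>0$) $\gamma_k > (1-\eta)/(2\kappa)$, so in either case $\gamma_c\gamma_k \geq \min\{\gamma_c/(1+\kappa),\gamma_c(1-\eta)/(2\kappa)\} \geq \gammamin$. For $k \in \Kcal_H$, the two bounds are both cubic in $|(\lambda_k)_-|$, so that factor cancels entirely and a single division yields $\gamma_k > 3(1-\eta)/L$, hence $\gamma_c\gamma_k \geq 3\gamma_c(1-\eta)/L \geq \gammamin$. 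Finally, $\gammamin \in (0,1)$ because each of the four defining quantities is strictly positive while $\gamma_c/(1+\kappa) \leq \gamma_c < 1$.

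I expect the only mild obstacle to be the dichotomy on the $\min$ in the $\Kcal_g$ branch; the $\Kcal_H$ branch is completely transparent precisely because the matched cubic dependence on $|(\lambda_k)_-|$ on both sides causes that quantity to drop out. Everything else is essentially bookkeeping.
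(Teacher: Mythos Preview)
Your proposal is correct and is essentially the paper's argument in contrapositive form: the paper splits the inductive step on whether $\gamma_k$ lies above or below $\min\{1/(1+\kappa),(1-\eta)/(2\kappa),3(1-\eta)/L\}$ and shows that the ``small $\gamma_k$'' case forces $|\rho_k-1|\leq 1-\eta$ (hence $k\in\Scal$), whereas you split on $k\in\Scal$ versus $k\in\Ucal$ and show that $k\in\Ucal$ forces $\gamma_k$ to be large. The key inequalities (combining Lemma~\ref{lem-achievable} with Lemma~\ref{lem-difference} in each of the $\Kcal_g$ and $\Kcal_H$ branches) are identical in both presentations.
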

\begin{proof}
For a proof by induction, 
we first note that $\gamma_0 \geq \gammaLB \geq \gammamin$ by the choice for $\gamma_0$  in Algorithm~\ref{alg-gtr}, so that~\eqref{gamma.lb} holds when $k = 0$. Next, supposing that~\eqref{gamma.lb} holds for $k$, we proceed to show that it also holds with $k$ replaced by $k+1$.
\benumerate
  \item[\underline{Case 1:}] $\gamma_k
> \min\left\{1/(1+\kappa), (1-\etasuccessful)/(2\kappa),3(1-\eta)/L\right\}$. In this case, it holds that
$$
\gamma_{k+1}
\geq \min\{\gammaLB,\gamma_c\gamma_k\}
\geq \min\left\{\gammaLB,\gamma_c/(1+\kappa), \gamma_c(1-\etasuccessful)/(2\kappa), 3\gamma_c(1-\eta)/L\right\}
\equiv \gammamin,
$$
meaning that~\eqref{gamma.lb} holds with $k$ replaced by $k+1$ as claimed.
  \item[\underline{Case 2:}] $\gamma_k
\leq \min\left\{1/(1+\kappa), (1-\etasuccessful)/(2\kappa), 3(1-\eta)/L\right\}$.  We must consider two subcases.  First, suppose that $k\in\Kcal_g$. It follows from the definition of $\rho_k$, Lemma~\ref{lem-difference}, and Lemma~\ref{lem-achievable}
that
\begin{align*}
| \rho_k - 1 |
&= \frac{|\,f(x_k+s_k) - m_k(s_k)|}{m_k(0) - m_k(s_k)} 
\leq \frac{2\kappa \gamma_k^2 \|g_k\|^2}{\min\big\{(1+\kappa)^{-1},\gamma_k\big\} \|g_k\|^2}
= 2\kappa\gamma_k
\leq 1-\eta,
\end{align*}
which implies that $\rho_k \geq \etasuccessful$, i.e., that $k\in\Scal$.  Second, suppose that $k\in\Kcal_H$.  It follows from Lemma~\ref{lem-difference} and Lemma~\ref{lem-achievable}
that
\begin{align*}
| \rho_k - 1 |
&= \frac{|\,f(x_k+s_k) - m_k(s_k)|}{m_k(0) - m_k(s_k)} 
\leq \frac{L\gamma_k^3|(\lambda_k)_-|^3}{3\gamma_k^2|(\lambda_k)_-|^3}
= \frac{L\gamma_k}{3}
\leq 1-\eta,
\end{align*}
which implies $\rho_k \geq \etasuccessful$, i.e., that $k\in\Scal$.  In either subcase, it follows that $k\in\Scal$.  Using $k\in\Scal$, we have from Algorithm~\ref{alg-gtr} that $\gamma_{k+1} \geq \gammaLB \geq \gammamin$, so that~\eqref{gamma.lb} holds with $k$ replaced by $k+1$ as claimed.
\eenumerate
The result follows since we proved the inductive step in each case.\qed 
\end{proof}

The next result gives a refined bound on the decrease in the model $m_k$ for all $k\in\N{}$, as well as gives a bound on the decrease in $f$ when $k\in\Scal$.

\begin{lemma}\label{lem.dec.m.g}
For all $k\in\N{}$, the trial step $s_k$ satisfies
\begin{equation} \label{mod-dec}
m_k(0) - m_k(s_k)
\geq
\begin{cases}
 \thalf\gammamin \|g_k\|^2 & \text{if $k\in\Kcal_g$,} \\
  \thalf \gammamin^2 |(\lambda_k)_-|^3 & \text{if $k\in\Kcal_H$.} 
\end{cases}
\end{equation}
In addition, with
$\kappa_{\min}:=\thalf\eta\gammamin^2$, we have for all $k \in \N{}$ that
\begin{equation}\label{f-dec}
f_k - f_{k+1}
\geq
\begin{cases}
 \kappa_{\min} \|g_k\|^2 & \text{if $k\in\Kcal_g\cap\Scal$,} \\
 \kappa_{\min} |(\lambda_k)_-|^3 & \text{if $k\in\Kcal_H\cap\Scal$.}
\end{cases}
\end{equation}
\end{lemma}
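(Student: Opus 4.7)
The plan is to chain Lemma~\ref{lem-achievable} (model decrease in terms of $\gamma_k$) with Lemma~\ref{lem-progress} (uniform lower bound $\gamma_k \geq \gammamin$), and then invoke the definition of a successful iteration to translate model decrease into objective decrease.

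For~\eqref{mod-dec}, I would treat the two cases separately. When $k\in\Kcal_H$, Lemma~\ref{lem-achievable} already gives $m_k(0) - m_k(s_k) \geq \thalf \gamma_k^2 |(\lambda_k)_-|^3$, and Lemma~\ref{lem-progress} yields $\gamma_k^2 \geq \gammamin^2$, producing the desired bound immediately. When $k\in\Kcal_g$, Lemma~\ref{lem-achievable} gives a bound involving $\min\{(1+\kappa)^{-1},\gamma_k\}$; the key observation is that the definition of $\gammamin$ in~\eqref{gamma.lb} enforces $\gammamin \leq \gamma_c/(1+\kappa) \leq 1/(1+\kappa)$, while Lemma~\ref{lem-progress} gives $\gammamin \leq \gamma_k$, so $\min\{(1+\kappa)^{-1},\gamma_k\} \geq \gammamin$, and~\eqref{mod-dec} follows.

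For~\eqref{f-dec}, I would use the fact that $k\in\Scal$ means $\rho_k \geq \eta$ and $x_{k+1} = x_k + s_k$, so by the definition of $\rho_k$ in Step~\ref{step:rho},
\[
f_k - f_{k+1} = f_k - f(x_k + s_k) \geq \eta\,(m_k(0) - m_k(s_k)).
\]
Substituting the bound~\eqref{mod-dec} just established gives $f_k - f_{k+1} \geq \thalf\eta\gammamin\|g_k\|^2$ for $k\in\Kcal_g\cap\Scal$ and $f_k - f_{k+1} \geq \thalf\eta\gammamin^2 |(\lambda_k)_-|^3$ for $k\in\Kcal_H\cap\Scal$. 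Since~\eqref{gamma.lb} guarantees $\gammamin \in (0,1)$, we have $\gammamin \geq \gammamin^2$, so the first-order bound weakens to $\thalf\eta\gammamin^2\|g_k\|^2 = \kappa_{\min}\|g_k\|^2$, giving a single unified constant across both cases.

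There is no real obstacle here; the lemma is essentially a bookkeeping consolidation of Lemmas~\ref{lem-achievable} and~\ref{lem-progress}. The only mildly delicate step is checking that the specific constants appearing in the definition of $\gammamin$ imply $\gammamin \leq 1/(1+\kappa)$, which is what lets us drop the $\min$ in the $\Kcal_g$ branch and obtain a clean $\gammamin$ factor.
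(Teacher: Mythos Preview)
Your proof is correct and matches the paper's approach exactly: combine Lemma~\ref{lem-achievable} with Lemma~\ref{lem-progress}, using the observation that $\gammamin \leq \gamma_c(1+\kappa)^{-1} \leq (1+\kappa)^{-1}$ to collapse the $\min$ in the $\Kcal_g$ case, and then invoke $\rho_k \geq \eta$ for $k\in\Scal$. You are in fact more explicit than the paper about the weakening $\gammamin \geq \gammamin^2$ needed to unify the constant to $\kappa_{\min}$ in the $\Kcal_g$ branch of~\eqref{f-dec}.
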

\begin{proof}
The lower bound~\eqref{mod-dec} follows from Lemma~\ref{lem-achievable} and Lemma~\ref{lem-progress}, after observing that $\gammamin\leq\gamma_c(1+\kappa)^{-1}\leq (1+\kappa)^{-1}$.  The result~\eqref{f-dec} follows from~\eqref{mod-dec} and the fact that $\rho_k \geq \eta$ when $k\in\Scal$. \qed
\end{proof}

We now show that the maximum number of consecutive unsuccessful iterations can be bounded. It is interesting to note that this result does not depend on the specific manner in which $\TRk$ is chosen in Step~\ref{step.delta.1} or Step~\ref{step.delta.2}.

\begin{lemma} \label{lem.bound-consec-U}
The number of consecutive iterations in $\Ucal$ is at most $\lceil\log_{\gamma_c}(\tfrac{\gammamin}{\gammaUB})\rceil > 0$.
\end{lemma}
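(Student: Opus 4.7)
The plan is to combine two features of the algorithm: the geometric shrinkage $\gamma_{k+1} = \gamma_c \gamma_k$ that occurs at every unsuccessful iteration (Step~\ref{step.U}), and the uniform lower bound $\gamma_k \geq \gammamin$ established in Lemma~\ref{lem-progress}. Squeezing $j$ between a geometrically decaying upper bound and a fixed positive floor will pin it down.

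First, I would fix a maximal consecutive run of unsuccessful iterations $\{k_0, k_0+1, \dots, k_0 + j - 1\} \subseteq \Ucal$ and argue that $\gamma_{k_0} \leq \gammaUB$. If $k_0 = 0$, this is immediate from the initialization $\gamma_0 \in [\gammaLB,\gammaUB]$; if $k_0 > 0$, then by maximality of the run $k_0 - 1 \in \Scal$, and Step~\ref{TR-inc-vs} set $\gamma_{k_0} \in [\gammaLB, \gammaUB]$.

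Next, I would iterate the unsuccessful update across $k = k_0, k_0+1, \dots, k_0 + j - 1$ to obtain
\[
\gamma_{k_0+j} \;=\; \gamma_c^{j}\,\gamma_{k_0} \;\leq\; \gamma_c^{j}\,\gammaUB.
\]
Applying Lemma~\ref{lem-progress} to the iterate $k_0 + j$ then gives $\gamma_{k_0+j} \geq \gammamin$, so $\gamma_c^{j}\,\gammaUB \geq \gammamin$, i.e., $\gamma_c^{j} \geq \gammamin / \gammaUB$. Taking $\log_{\gamma_c}$ of both sides and noting that this is a decreasing function (since $\gamma_c \in (0,1)$), we arrive at $j \leq \log_{\gamma_c}(\gammamin / \gammaUB)$, and integrality of $j$ promotes this to the claimed ceiling bound.

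I do not anticipate any real obstacle, since Lemma~\ref{lem-progress} has already done the essential work. The only details that require care are correctly tracking the number of contractions (the exponent $j$ corresponds to the $j$ unsuccessful iterations in the run, so one must compare $\gamma_{k_0+j}$, not $\gamma_{k_0+j-1}$, against $\gammamin$) and remembering to reverse the inequality when passing through $\log_{\gamma_c}$.
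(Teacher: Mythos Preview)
Your proof is correct and follows essentially the same approach as the paper: combine the uniform lower bound $\gamma_k \geq \gammamin$ from Lemma~\ref{lem-progress} with the geometric contraction on unsuccessful iterations to bound the run length. The paper's version is marginally simpler in that it notes $\gamma_k \leq \gammaUB$ holds for \emph{all} $k$ (by a trivial induction on the update rules), so there is no need to invoke maximality of the run to secure $\gamma_{k_0} \leq \gammaUB$.
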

\begin{proof}
The update strategy for $\{\gamma_k\}$ ensures that $\gamma_k \leq \gammaUB$ for all $k\in\N{}$. Also, it follows from Lemma~\ref{lem-progress} that $\gamma_k \geq \gammamin$ for all $k\in\N{}$.  Since the update $\gamma_{k+1} \gets \gamma_c \gamma_k$ is used when $k\in\Ucal$, we must conclude that the maximum number of consecutive iterations in $\Ucal$ can be no larger than $\lceil\log_{\gamma_c}(\gammamin/\gammaUB)\rceil > 0$, as claimed. \qed
\end{proof}

\subsection{A Strategy with a Concise First-Order Complexity Analysis}\label{sec.first}

In this section, our aim is to provide an upper bound on the maximum number of iterations until the norm of the gradient falls below a threshold value, say $\eps_g\in\R{}_{++}$.  For this reason, it will be convenient to define the sets
$$
\Scal_1(\eps_g) := \{k\in\Scal: \|g_k\| > \eps_g\}\ \ \text{and}\ \ \Kcal_1(\eps_g) := \{k\in\N{}: \|g_k\| > \eps_g\}.
$$
Also, since we are currently interested in approximate \emph{first-order} stationarity, it is reasonable to use the following trust region radius update strategy.
\begin{TRchoice}\label{up.1}
For any $k\in\N{}$ such that Step~\ref{step.delta.1} is reached, we set $\TRk \gets \gamma_k\|g_k\|$. 
\end{TRchoice}
Combining Update~\ref{up.1} with Step~\ref{step.delta.2} shows that $\TRk \gets \gamma_k \|g_k\|$ for all $k\in\N{}$ so that 
\begin{equation} \label{KgisN}
\Kcal_g = \N{}
\ \ \text{and} \ \
\Kcal_H = \emptyset.
\end{equation}
The remaining results of this section assume that Update~\ref{up.1} is used. 

We start by proving an upper bound on the size of the index set $\Scal_1(\eps_g)$.

\begin{lemma}\label{lem.bound-Seps}
For any $\eps_g\in\R{}_{++}$, the size of $\Scal_1(\eps_g)$ satisfies
\begin{equation}\label{Seps-bound}
|\Scal_1(\eps_g)|
\leq \left\lfloor \left(\frac{f_0 - f_{\inf}}{\kappa_{\min}}\right) \eps_g^{-2}\right\rfloor. 
\end{equation}
\end{lemma}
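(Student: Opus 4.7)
The plan is to use the per-iteration decrease in $f$ guaranteed by Lemma~\ref{lem.dec.m.g} and telescope it over $\Scal_1(\eps_g)$, bounding the total decrease by $f_0 - f_{\inf}$ via Assumption~\ref{ass.main}.

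More concretely, the first step is to invoke \eqref{KgisN}: under Update~\ref{up.1} every iteration lies in $\Kcal_g$, so for every $k\in\Scal$ the first case of \eqref{f-dec} applies, giving $f_k - f_{k+1} \geq \kappa_{\min}\|g_k\|^2$. For $k \in \Scal_1(\eps_g)$ the defining property $\|g_k\|>\eps_g$ then yields $f_k - f_{k+1} > \kappa_{\min}\eps_g^2$.

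Next I would observe that on every $k \in \Ucal$ we have $x_{k+1} = x_k$ and therefore $f_{k+1} = f_k$, while on successful iterations $f_{k+1} \leq f_k$ by the same inequality above. Thus $\{f_k\}$ is monotonically non-increasing, so for any $K\in\N{}$,
\begin{equation*}
f_0 - f_{\inf} \;\geq\; f_0 - f_{K+1} \;=\; \sum_{k=0}^{K}(f_k - f_{k+1}) \;\geq\; \sum_{k\in\Scal_1(\eps_g),\,k\leq K}(f_k - f_{k+1}) \;\geq\; \kappa_{\min}\eps_g^2\,|\Scal_1(\eps_g)\cap\{0,\dots,K\}|.
\end{equation*}
Letting $K\to\infty$ gives $|\Scal_1(\eps_g)|\,\kappa_{\min}\eps_g^2 \leq f_0 - f_{\inf}$, and dividing by $\kappa_{\min}\eps_g^2 > 0$ and taking the floor (since $|\Scal_1(\eps_g)|$ is a nonnegative integer) yields \eqref{Seps-bound}.

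There is no genuine obstacle here; the argument is the standard telescoping complexity argument, and the only point requiring a bit of care is making clear that unsuccessful iterations contribute zero to the telescoped sum so that restricting the summation to $\Scal_1(\eps_g)$ is legitimate. All of the analytical content (the lower bound $\gamma_k \geq \gammamin$, the Cauchy decrease, and the agreement between model and objective) has already been absorbed into Lemma~\ref{lem.dec.m.g}, so the proof reduces to invoking \eqref{f-dec} and the boundedness of $f$ below.
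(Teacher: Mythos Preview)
Your argument is correct and follows essentially the same route as the paper: invoke \eqref{KgisN} so that \eqref{f-dec} gives $f_k - f_{k+1} \geq \kappa_{\min}\|g_k\|^2$ on every successful iteration, use monotonicity of $\{f_k\}$ to restrict the telescoping sum to $\Scal_1(\eps_g)$, and bound the total by $f_0 - f_{\inf}$. Your extra care with the finite truncation and limit $K\to\infty$ and with justifying monotonicity is sound but not strictly needed beyond what the paper states.
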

\begin{proof}
We start by noting that $\Scal_1(\eps_g) \subseteq \N{} = \Kcal_g$ because of~\eqref{KgisN}. 
Combining this with Assumption~\ref{ass.main}, monotonicity of $\{f_k\}$, \eqref{f-dec}, and the definition of $\Scal_1(\eps_g)$ gives
\begin{align*}
f_0 - f_{\inf}
&\geq \sum_{k\in\Scal_1(\eps_g)} \!\!\big(f_{k}-f_{k+1}\big)
\geq \kappa_{\min}\sum_{k\in\Scal_1(\eps_g)} \!\!\|g_k\|^2
%\geq \kappa_{\min}\sum_{k\in\Scal_1(\eps_g)} \!\!\eps_g^2
\geq \kappa_{\min} \eps_g^2 |\Scal_1(\eps_g)|.
\end{align*}
The bound in~\eqref{Seps-bound} follows from this inequality. \qed
\end{proof}

We obtain the complexity result by combining the last with Lemma~\ref{lem.bound-consec-U}.

\begin{theorem}\label{thm-complexity}
For any $\eps_g\in\R{}_{++}$, the size of $\Kcal_1(\eps_g)$ satisfies
$$
|\Kcal_1(\eps_g)| 
\leq
\left\lceil\log_{\gamma_c}\(\frac{\gammamin}{\gammaUB}\)\right\rceil \left\lfloor \left(\frac{f_0 - f_{\inf}}{\kappa_{\min}}\right) \eps_g^{-2}\right\rfloor
= \Ocal(\eps_g^{-2}).
$$
\end{theorem}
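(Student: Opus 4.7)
The plan is to decompose $\Kcal_1(\eps_g)$ into its successful and unsuccessful parts, bound the successful part via Lemma~\ref{lem.bound-Seps}, and control the unsuccessful part with a charging argument that leverages Lemma~\ref{lem.bound-consec-U}.

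First I would write $\Kcal_1(\eps_g) = \Scal_1(\eps_g) \,\cup\, (\Ucal \cap \Kcal_1(\eps_g))$. Lemma~\ref{lem.bound-Seps} immediately yields the required bound on $|\Scal_1(\eps_g)|$, so the task reduces to counting large-gradient \emph{unsuccessful} iterations. The crucial algorithmic fact I would exploit is that Step~\ref{step.U} sets $x_{k+1} \gets x_k$ whenever $k \in \Ucal$, so $g_k$ is constant along any run of consecutive elements of $\Ucal$.

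I would then charge each $k \in \Kcal_1(\eps_g)$ to $\sigma(k) := \min\{j \geq k : j \in \Scal\}$. By Lemma~\ref{lem.bound-consec-U}, no run of consecutive unsuccessful iterations can exceed $N_U := \lceil\log_{\gamma_c}(\gammamin/\gammaUB)\rceil$, so $\sigma(k)$ is well-defined with $\sigma(k) - k \leq N_U$; because the gradient is invariant on the unsuccessful tail starting at $k$, we obtain $\|g_{\sigma(k)}\| = \|g_k\| > \eps_g$, hence $\sigma(k) \in \Scal_1(\eps_g)$. Counting preimages, each element of $\Scal_1(\eps_g)$ is charged by itself plus its at most $N_U$ immediately preceding unsuccessful iterations in $\Kcal_1(\eps_g)$, yielding $|\Kcal_1(\eps_g)| \leq (N_U + 1)\,|\Scal_1(\eps_g)|$. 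Combining with Lemma~\ref{lem.bound-Seps} then produces the stated $\Ocal(\eps_g^{-2})$ bound. The main (minor) obstacle is the bookkeeping around $\sigma$ — one must verify $\sigma(k)\in\Scal$ for every $k\in\Kcal_1(\eps_g)$, but this is immediate since Lemma~\ref{lem.bound-consec-U} rules out an infinite tail of unsuccessful iterations.
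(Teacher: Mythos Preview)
Your proposal is correct and follows essentially the same route as the paper, whose proof merely states that the result ``follows by combining Lemma~\ref{lem.bound-Seps} and Lemma~\ref{lem.bound-consec-U}''; your charging map $\sigma$ makes that combination explicit. The only discrepancy is cosmetic: your count produces the multiplicative constant $N_U+1$ rather than the paper's $N_U = \lceil\log_{\gamma_c}(\gammamin/\gammaUB)\rceil$, but both deliver the same $\Ocal(\eps_g^{-2})$ bound.
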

\begin{proof}
The result follows by combining Lemma~\ref{lem.bound-Seps} and Lemma~\ref{lem.bound-consec-U}. \qed
\end{proof}

We conclude this section by discussing how the trust region radius update considered in this section compares to a traditional strategy. In fact, the strategies are the same for unsuccessful iterations since they both set $\delta_{k+1} \gets \gamma_c \TRk$. However, if $k$ is a successful iteration, then a traditional strategy sets $\delta_{k+1}^{trad} \gets \max\{\gamma_e\|s_k\|, \TRk\}$ for some $\gamma_e \geq 1$.  This update is also allowed by 
Update~\ref{up.1} as long as
$$
\TRnext^{trad} 
\gets \max\{\gamma_e\|s_k\|,\TRk\} \equiv \max\{\gamma_e\|s_k\|,\gamma_k\|g_k\|\} \in [\gammaLB \|g_{k+1}\|, \gammaUB \|g_{k+1}\|].
$$
In particular, this means that the traditional update is \emph{not} allowed in two scenarios.  The first is when $\max\{\gamma_e\|s_k\|,\gamma_k\|g_k\|\} < \gammaLB \|g_{k+1}\|$.  Since $\gamma_e \geq 1$ and $\gammaLB$ is intended to serve as a lower-bound safeguard (e.g., a typical value might be $10^{-8}$ or smaller), this scenario indicates that the accepted step $s_k$ and gradient $g_k$ are very small in norm compared to the new gradient $g_{k+1}$.  But since $\|g_{k+1}\|$ being large means that the next reduction in $f$ could also be large with a relatively large step, we argue that Update~\ref{up.1} makes sense.
The second scenario in which the traditional update is not allowed is when $\max\{\gamma_e\|s_k\|,\gamma_k\|g_k\|\} > \gammaUB \|g_{k+1}\|$. Since $\gamma_e$ is moderate in size (e.g., a typical value is $\gamma_e = 2$) and $\gammaUB$ is intended to serve as an upper-bound safeguard (e.g., a typical value might be $10^{8}$ or larger), this scenario indicates that the previous radius is significantly larger than the size of the gradient at the new iterate $x_{k+1}$. Since an additional increase in the trust region radius does not seem warranted, we again believe that Update~\ref{up.1} makes sense.

%*********
% Section
%*********
\subsection{A Strategy with a Concise Second-Order Complexity Analysis}\label{sec.second}

In this section, our aim is to provide an upper bound on the maximum number of iterations until, given $(\eps_g,\eps_H) \in \R{}_{++} \times \R{}_{++}$, an iterate $x_k$ satisfies $\|g_k\| \leq \eps_g$ and $\lambda_k \geq -\eps_H$. For this reason, it will be convenient to define the sets
\bequationNN
  \baligned
    \Scal_2(\eps_g,\eps_H) &:= \{k\in\Scal: \|g_k\| > \eps_g \ \text{or} \ |(\lambda_k)_-| > \eps_H\} \\ \text{and}\ \ 
    \Kcal_2(\eps_g,\eps_H) &:= \{k\in\N{}: \|g_k\| > \eps_g \ \text{or} \ |(\lambda_k)_-| > \eps_H\}.
  \ealigned
\eequationNN
Since we are now interested in approximate second-order optimality, and motivated by the decrease in $f$ guaranteed by~\eqref{f-dec} for successful iterations, in this section we adopt the following trust region radius update strategy.

\begin{TRchoice}\label{up.2}
For any $k\in\N{}$ such that Step~\ref{step.delta.1} is reached, in which case $\lambda_k < 0$, set 
$$
\TRk \gets
\begin{cases}
\gamma_k\|g_k\| & \text{if $\|g_k\|^2 \geq |(\lambda_k)_-|^3$,} \\
\gamma_k |(\lambda_k)_-| & \text{if $\|g_k\|^2 < |(\lambda_k)_-|^3$.}
\end{cases}
$$
\end{TRchoice}
The results of this section assume that Update~\ref{up.2} is used.

We first prove an upper bound on the size of the index set $\Scal_2(\eps_g,\eps_H)$.

\begin{lemma}\label{lem.bound-Seps.2}
For any $(\eps_g,\eps_H)\in\R{}_{++}\times\R{}_{++}$, the size of $\Scal_2(\eps_g,\eps_H)$ satisfies
\begin{equation}\label{Seps-bound.2}
  |\Scal_2(\eps_g,\eps_H)| \leq \left\lfloor \left(\frac{f_0 - f_{\inf}}{\kappa_{\min}}\right) \max\left\{\eps_g^{-2},\eps_H^{-3}\right\}\right\rfloor.
\end{equation}
%\begin{align}
%|\Scal_2(\eps_g,\eps_H)\cap\Kcal_H|
%&\leq \left\lfloor \left(\frac{f_0 - f_{\inf}}{\kappa_{\min}}\right) \max\left\{\eps_g^{-2},\eps_H^{-3}\right\}\right\rfloor
%\ \ \text{and} \ \  \label{Seps-bound.2.H} \\
%|\Scal_2(\eps_g,\eps_H)\cap\Kcal_g|
%&\leq \left\lfloor \left(\frac{f_0 - f_{\inf}}{\kappa_{\min}}\right) \max\left\{\eps_g^{-2},\eps_H^{-3}\right\}\right\rfloor. \label{Seps-bound.2.g} 
%\end{align}
\end{lemma}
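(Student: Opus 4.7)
The plan is to imitate the telescoping-sum argument used in Lemma~\ref{lem.bound-Seps}, but with two ingredients in place of one: a per-iteration lower bound on $f_k - f_{k+1}$ for $k \in \Scal_2(\eps_g,\eps_H)$, and a case analysis over $\Kcal_g$ vs.\ $\Kcal_H$ induced by Update~\ref{up.2}. Since monotonicity of $\{f_k\}$ (unsuccessful iterations have $x_{k+1} = x_k$) together with Assumption~\ref{ass.main} yields $f_0 - \finf \geq \sum_{k\in\Scal_2(\eps_g,\eps_H)}(f_k - f_{k+1})$, the task reduces to showing that each summand is at least $\kappa_{\min} \min\{\eps_g^2,\eps_H^3\}$; dividing then produces the factor $\max\{\eps_g^{-2},\eps_H^{-3}\}$ appearing in~\eqref{Seps-bound.2}.

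For the per-iteration bound, fix $k \in \Scal_2(\eps_g,\eps_H)$ and split according to which branch of Update~\ref{up.2} (or Step~\ref{step.delta.2}) was used. If $k\in\Kcal_g$, Lemma~\ref{lem.dec.m.g} yields $f_k - f_{k+1} \geq \kappa_{\min}\|g_k\|^2$; if $k\in\Kcal_H$, it yields $f_k - f_{k+1} \geq \kappa_{\min}|(\lambda_k)_-|^3$. The key observation is that Update~\ref{up.2} places $k$ into $\Kcal_g$ precisely when $\lambda_k \geq 0$ or when $\|g_k\|^2 \geq |(\lambda_k)_-|^3$, and into $\Kcal_H$ precisely when $\lambda_k<0$ and $\|g_k\|^2 < |(\lambda_k)_-|^3$. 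So in every case the guaranteed decrease is at least $\kappa_{\min}\max\{\|g_k\|^2,|(\lambda_k)_-|^3\}$: for $k\in\Kcal_g$ the inequality $\|g_k\|^2 \geq |(\lambda_k)_-|^3$ (trivially if $\lambda_k\geq 0$) gives the max, and for $k\in\Kcal_H$ the inequality $|(\lambda_k)_-|^3 > \|g_k\|^2$ does.

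Because $k\in\Scal_2(\eps_g,\eps_H)$ means either $\|g_k\|>\eps_g$ or $|(\lambda_k)_-|>\eps_H$, the quantity $\max\{\|g_k\|^2,|(\lambda_k)_-|^3\}$ is bounded below by $\min\{\eps_g^2,\eps_H^3\}$: if $\|g_k\|>\eps_g$, the max is at least $\eps_g^2$, and if $|(\lambda_k)_-|>\eps_H$, the max is at least $\eps_H^3$; either way, it dominates the smaller of $\eps_g^2$ and $\eps_H^3$. Combining this uniform lower bound with the telescoping sum gives $f_0 - \finf \geq \kappa_{\min}\min\{\eps_g^2,\eps_H^3\}\,|\Scal_2(\eps_g,\eps_H)|$, and rearranging (using $1/\min\{a,b\} = \max\{1/a,1/b\}$ for positive $a,b$) yields~\eqref{Seps-bound.2} after taking the floor since $|\Scal_2(\eps_g,\eps_H)|$ is an integer.

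I expect no serious obstacle: the only subtle point is verifying that in both sub-branches of $\Kcal_g$ (the $\lambda_k\geq 0$ sub-branch handled by Step~\ref{step.delta.2} and the $\lambda_k<0$, $\|g_k\|^2\geq|(\lambda_k)_-|^3$ sub-branch from Update~\ref{up.2}) we can legitimately write the decrease as $\kappa_{\min}\max\{\|g_k\|^2,|(\lambda_k)_-|^3\}$; this just amounts to noting that $|(\lambda_k)_-|=0$ when $\lambda_k\geq 0$, so $\|g_k\|^2 \geq |(\lambda_k)_-|^3$ in that case too.
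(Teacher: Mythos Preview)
Your proposal is correct and follows essentially the same approach as the paper's proof: split $k\in\Scal_2(\eps_g,\eps_H)$ according to whether $k\in\Kcal_g$ or $k\in\Kcal_H$, use~\eqref{f-dec} together with the characterization of $\Kcal_g$/$\Kcal_H$ from Update~\ref{up.2} to get the per-iteration lower bound $\kappa_{\min}\min\{\eps_g^2,\eps_H^3\}$, then telescope. Your intermediate observation that the decrease is at least $\kappa_{\min}\max\{\|g_k\|^2,|(\lambda_k)_-|^3\}$ is a clean way to package the case analysis, but it is only a cosmetic repackaging of the same logic the paper uses.
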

\begin{proof}
  Combining Update~\ref{up.2} with Step~\ref{step.delta.2}, it follows that $k \in \Kcal_g$ if and only if $\|g_k\|^2 \geq |(\lambda_k)_-|^3$ while
$k\in\Kcal_H$ if and only if $|(\lambda_k)_-|^3 > \|g_k\|^2$.  Consider $k \in \Scal_2(\eps_g,\eps_H)\cap\Kcal_H$.  By \eqref{f-dec} and $\Kcal\subseteq\Kcal_H$,
  \begin{equation}\label{eq.ftrump1}
    f_k - f_{k+1} \geq \kappa_{\min} |(\lambda_k)_-|^3 \geq \kappa_{\min} \min\{\eps_g^2,\eps_H^3\}.
  \end{equation}
  Now consider $k \in \Scal_2(\eps_g,\eps_H)\cap\Kcal_g$.  By \eqref{f-dec} and $\Kcal\subseteq\Kcal_g$,
  \begin{equation}\label{eq.ftrump2}
    f_k - f_{k+1} \geq \kappa_{\min} \|g_k\|^2 \geq \kappa_{\min} \min\{\eps_g^2,\eps_H^3\}.
  \end{equation}
  Combining \eqref{eq.ftrump1}, \eqref{eq.ftrump2}, Assumption~\ref{ass.main}, and monotonicity of $\{f_k\}$, one finds
  \begin{equation*}
    f_0 - f_{\inf} \geq \sum_{k\in\Scal_2(\eps_g,\eps_H)} (f_k - f_{k+1}) \geq \kappa_{\min} \min\{\eps_g^2,\eps_H^3\} |\Scal_2(\eps_g,\eps_H)|,
  \end{equation*}
  which, after rearrangement, leads to \eqref{Seps-bound.2}.
  
%First, let us define  $\Kcal = \Scal_2(\eps_g,\eps_H)\cap\Kcal_H$.  We may use
%Assumption~\ref{ass.main}, monotonicity of $\{f_k\}$, \eqref{f-dec}, $\Kcal\subseteq\Kcal_H$, and Lemma~\ref{consequence.2} to conclude that
%\begin{align*}
%f_0 - f_{\inf}
%&\geq \sum_{k\in\Kcal} \big(f_{k}-f_{k+1}\big)
%\geq \kappa_{\min}\sum_{k\in\Kcal} |\lambda_k|^3
%=  \kappa_{\min}\sum_{k\in\Kcal} |(\lambda_k)_-|^3 \\
%&= \kappa_{\min}\sum_{k\in\Kcal} \max\{|(\lambda_k)_-|,\|g_k\|\}^3 
%&\geq \kappa_{\min}\sum_{k\in\Kcal} \min\{\eps_g^2,\eps_H^3\}
%= \kappa_{\min} \min\{\eps_g^2,\eps_H^3\} |\Kcal|.
%\end{align*}
%The bound in~\eqref{Seps-bound.2.H} follows from this inequality and the definition of $\Kcal$. 

%To prove~\eqref{Seps-bound.2.g}, let us redefine $\Kcal = \Scal_2(\eps)\cap\Kcal_g$.  We may now use
%Assumption~\ref{ass.main}, monotonicity of $\{f_k\}$, \eqref{f-dec}, $\Kcal\subseteq\Kcal_g$, and Lemma~\ref{consequence.2}
%to conclude that
%\begin{align*}
%f_0 - f_{\inf}
%&\geq \sum_{k\in\Kcal} \big(f_{k}-f_{k+1}\big)
%\geq \kappa_{\min}\sum_{k\in\Kcal} \|g_k\|^2 \\
%&= \kappa_{\min}\sum_{k\in\Kcal} \max\{|(\lambda_k)_-|,\|g_k\|\}^2 
%&\geq \kappa_{\min}\sum_{k\in\Kcal} \min\{\eps_g^2,\eps_H^3\}
%\geq \kappa_{\min}\sum_{k\in\Kcal} \eps^2
%= \kappa_{\min} \min\{\eps_g^2,\eps_H^3\} |\Kcal|.
%\end{align*}
%The bound in~\eqref{Seps-bound.2.g} follows from this inequality and the definition of $\Kcal$. \qed
\end{proof}

This leads directly to our second-order complexity result.

\begin{theorem}\label{thm-complexity.2}
For any $(\eps_g,\eps_H)\in\R{}_{++}\times\R{}_{++}$, the size of $|\Kcal_2(\eps_g,\eps_H)|$ satisfies
$$
|\Kcal_2(\eps_g,\eps_H)| 
\leq
\left\lceil\log_{\gamma_c}\(\frac{\gammamin}{\gammaUB}\)\right\rceil \left\lfloor \left(\frac{f_0 - f_{\inf}}{\kappa_{\min}}\right) \max\left\{\eps_g^{-2},\eps_H^{-3}\right\}\right\rfloor
= \Ocal\(\max\big\{\eps_g^{-2},\eps_H^{-3}\big\}\).
$$
%where $c_2 := \left( \lceil\log_{\gamma_c}(\gammamin/\gammaUB)\rceil + 1 \right)$.
\end{theorem}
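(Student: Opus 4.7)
The plan is to mirror exactly the proof of Theorem~\ref{thm-complexity}: combine the bound on successful iterations from Lemma~\ref{lem.bound-Seps.2} with the bound on consecutive unsuccessful iterations from Lemma~\ref{lem.bound-consec-U}. The statement is a direct corollary of these two lemmas, so no new analytical content is required here.

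First, I would observe that $\Kcal_2(\eps_g,\eps_H)$ decomposes into its successful part and its unsuccessful part. The successful part is, by definition, exactly $\Scal_2(\eps_g,\eps_H)$, whose cardinality is already controlled by Lemma~\ref{lem.bound-Seps.2}. For the unsuccessful part, I would invoke Lemma~\ref{lem.bound-consec-U}, which guarantees that any run of consecutive unsuccessful iterations has length at most $\lceil\log_{\gamma_c}(\gammamin/\gammaUB)\rceil$. Consequently, each successful iteration in $\Scal_2(\eps_g,\eps_H)$ can ``account for'' at most $\lceil\log_{\gamma_c}(\gammamin/\gammaUB)\rceil$ iterations of $\Kcal_2(\eps_g,\eps_H)$ (itself plus the unsuccessful iterations preceding it), yielding
\[
|\Kcal_2(\eps_g,\eps_H)| \leq \left\lceil\log_{\gamma_c}\!\(\tfrac{\gammamin}{\gammaUB}\)\right\rceil |\Scal_2(\eps_g,\eps_H)|.
\]
Substituting the bound from Lemma~\ref{lem.bound-Seps.2} then gives the explicit product appearing in the theorem statement, and the $\Ocal(\max\{\eps_g^{-2},\eps_H^{-3}\})$ asymptotic follows immediately since the logarithmic factor and $(f_0-f_{\inf})/\kappa_{\min}$ are constants independent of $(\eps_g,\eps_H)$.

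There is no real obstacle to the argument: all of the technical work---namely, accounting for the two alternative radius choices in Update~\ref{up.2} and translating either $\|g_k\|>\eps_g$ or $|(\lambda_k)_-|>\eps_H$ into a uniform decrease $f_k-f_{k+1}\geq\kappa_{\min}\min\{\eps_g^2,\eps_H^3\}$---has already been absorbed into Lemma~\ref{lem.bound-Seps.2}. The only minor bookkeeping point to verify is that a possible trailing run of unsuccessful iterations at the end of $\Kcal_2(\eps_g,\eps_H)$ (before the first iterate satisfying both approximate stationarity conditions) is still captured by the product bound, which it is, since such a run also has length at most $\lceil\log_{\gamma_c}(\gammamin/\gammaUB)\rceil$ by Lemma~\ref{lem.bound-consec-U}.
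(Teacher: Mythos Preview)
Your proposal is correct and takes essentially the same approach as the paper, which simply states that the result follows by combining Lemma~\ref{lem.bound-Seps.2} and Lemma~\ref{lem.bound-consec-U}. Your write-up merely fills in the (standard) counting argument that the paper leaves implicit.
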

\begin{proof}
The result follows by combining Lemma~\ref{lem.bound-Seps.2} and Lemma~\ref{lem.bound-consec-U}. \qed
\end{proof}

%************
% Subsection
%************
\subsection{A Strategy with a Fixed Trust Region Radius}\label{sec.ye}

The strategy in Section~\ref{sec.first} offers a complexity of $\Ocal(\eps_g^{-2})$ for driving the norm of the gradient below $\eps_g \in \R{}_{++}$, which is consistent with the complexity of the strategy in Section~\ref{sec.second} when $\Kcal_g = \N{}$ or $\eps_H = \eps_g^{2/3}$.  It is known, however, that certain methods offer a complexity of $\Ocal(\epsilon_g^{-3/2})$; e.g., see \cite{CartGoulToin11b}.  Is it possible to design a trust region radius strategy that leads to this complexity, and what are its advantages and disadvantages compared to the strategies in Sections~\ref{sec.first} and \ref{sec.second} that do not offer the same complexity?  This is the subject of discussion in this subsection.

A trust region method with an $\Ocal(\epsilon_g^{-3/2})$ complexity for achieving approximate first-order stationarity was proposed and analyzed in \cite{CurtRobiSama17}.  This method can be seen, along with that in \cite{CartGoulToin11b}, as a special case of the general framework in \cite{CurtRobiSama17b} for achieving this order complexity.  One can also derive a trust region method with a fixed trust region radius that, with a concise analysis, leads to a $\Ocal(\epsilon_g^{-3/2})$ complexity.  Let us present this analysis now, which follows the lecture notes of Yinyu Ye.\footnote{\href{http://web.stanford.edu/class/msande311/lecture13.pdf}{http://web.stanford.edu/class/msande311/lecture13.pdf}}

Under Assumption~\ref{ass.main} and with $\beta := \thalf L$, it follows from~\cite[Theorem~3.1.6]{ConGT00a} and~\cite[Equation~(1.1)]{CartGoulToin11b} that, for all $(x,s) \in \R{n} \times \R{n}$,
\bsubequations\\[-0.6em]
  \begin{align}
    \|g(x+s) - g(x) - H(x) s\| &\leq \beta\|s\|^2 \label{eq.ye1} \\ \text{and}\ \ 
    f(x+s) - f(x) &\leq g(x)^Ts + \thalf s^TH(x)s + \tfrac13 \beta \|s\|^3. \label{eq.ye2}
  \end{align}
\esubequations
The algorithm that we consider here is one that, for all $k \in \N{}$, sets $\delta_k \gets \sqrt{\eps}/\beta$ and computes $(s_k,\xi_k)$ as a primal-dual solution of \eqref{def.Bmod} satisfying
\bsubequations
  \begin{align}
    g_k + (H_k + \xi_k I)s_k &= 0, \label{eq.dual_feas} \\
    H_k + \xi_k I &\succeq 0, \label{eq.shift} \\
    \xi_k \geq 0,\ \delta_k - \|s_k\| \geq 0,\ \text{and}\ \xi_k(\delta_k - \|s_k\|) &= 0. \label{eq.comp}
  \end{align}
\esubequations
This algorithm, unlike traditional methods, accepts all computed steps.

There are two situations to consider.
\benumerate
 % \item[\underline{Case 1:}] If $\|s_k\| < \delta_k$, then \eqref{eq.comp} implies that $\xi_k = 0$, which implies by \eqref{eq.ye1}, \eqref{eq.dual_feas}, and \eqref{eq.shift} that $\|g_{k+1}\| \leq \beta \|s_k\|^2 < \beta \delta_k^2 = \eps/\beta$ and $H_k \succeq 0$, the latter of which means, since $|\lambda_k - \lambda_{k+1}| \leq \|H_k - H_{k+1}\| \leq L\|s_k\|$ (the first inequality follows from~\cite[page~370]{horn1990matrix}), that $\lambda_{k+1} \geq -L\|s_k\| \geq -L\delta_k \geq -L\sqrt{\eps}/\beta$.  Overall, this means that, in iteration $k+1$, one finds
%  \bequationNN
%    \|g_{k+1}\| \leq \frac{\eps}{\beta}\ \ \text{and}\ \ \lambda_{k+1} \geq -\frac{L\sqrt{\eps}}{\beta} = -2\sqrt{\eps}.
%  \eequationNN
  \item[\underline{Case 1:}] If $\xi_k \leq \sqrt{\eps}$, then \eqref{eq.ye1} and \eqref{eq.dual_feas} imply
  \bequationNN
    \baligned
      \|g_{k+1}\|
      &\leq \|g_{k+1} - (g_k + H_ks_k)\| + \|g_k + H_ks_k\| \\
      &\leq \beta \|s_k\|^2 + \xi_k\|s_k\| \leq \beta \delta_k^2 + \xi_k\delta_k = \frac{\eps}{\beta} + \frac{\xi_k\sqrt{\eps}}{\beta} \leq \frac{2\epsilon}{\beta}.
    \ealigned
  \eequationNN
  Next, combining~\cite[page~370]{horn1990matrix} and Assumption~\ref{ass.main} we have $|\lambda_k - \lambda_{k+1}| \leq \|H_k - H_{k+1}\| \leq L\|s_k\|$, which with~\eqref{eq.shift} implies that $\lambda_{k+1} \geq \lambda_k -L\|s_k\| \geq -\xi_k-L\delta_k \geq -\sqrt{\eps} -L\sqrt{\eps}/\beta = -3\sqrt{\eps}$.
  Overall, in iteration $k+1$, one finds
  \bequationNN
    \|g_{k+1}\| \leq \frac{2\eps}{\beta}\ \ \text{and}\ \ \lambda_{k+1} \geq -3\sqrt{\eps}.
  \eequationNN
  \item[\underline{Case 2:}] If $\xi_k > \sqrt{\eps}$, then we know from~\eqref{eq.comp} that $\|s_k\| = \delta_k$, which may then be combined with \eqref{eq.dual_feas} and \eqref{eq.shift} to conclude that
\bequationNN
  g_k^Ts_k + \thalf s_k^TH_ks_k = -\thalf s_k^T(H_k + \xi_k I)s_k - \thalf \xi_k \|s_k\|^2 \leq - \thalf \xi_k \|s_k\|^2 = - \thalf \xi_k \delta_k^2.
\eequationNN
This may, in turn, be used with \eqref{eq.ye2} to obtain
\bequationNN
  f_{k+1} - f_k \leq -\thalf \xi_k \delta_k^2 + \tfrac13 \beta \delta_k^3 = -\frac{\xi_k\eps}{2\beta^2} + \frac{\eps^{3/2}}{3\beta^2} \leq -\frac{\eps^{3/2}}{6\beta^2}.
\eequationNN
  Letting $\Kcal := \{k \in \N{} : \xi_k > \sqrt{\eps}\}$, it follows that
  \bequationNN
    f_0 - f_{\inf} \geq \sum_{k\in\Kcal} (f_k - f_{k+1}) \geq \frac{\epsilon^{3/2}}{6\beta^2} |\Kcal|,
  \eequationNN
  which means that $|\Kcal| = \Ocal(\eps^{-3/2})$.
\eenumerate

Overall, we may conclude from these cases that the number of iterations until $\|g_k\| \leq \eps$ and $\lambda_k \geq -\sqrt{\eps}$ is at most $\Ocal(\eps^{-3/2})$.  One advantage of the strategy employed here is that one might be able to extend this strategy and analysis to situations in which $g_k$ and $H_k$ cannot be computed exactly in each iteration.  However, when $g_k$ and $H_k$ are computable, there are clear costs to achieving this improved complexity.  First, this algorithm requires knowledge of $\beta = \thalf L$, which is not always known in practice.  Second, the algorithm requires exact subproblem solutions.  This restriction might be relaxed using ideas such as in \cite{CurtRobiSama17b}, but one cannot simply employ Cauchy steps as are allowed in the strategies in Sections~\ref{sec.first} and \ref{sec.second}.  Third, the algorithm is dependent on the choice of $\epsilon$, meaning that the desired accuracy needs to be chosen in advance and even early iterations will behave differently depending on the final accuracy desired.  Finally, related to the third point, having the trust region radii depend on $\epsilon$, which is likely to be small, means that the algorithm is likely to take very small steps throughout the optimization process.  This would likely lead to very poor behavior in practice compared to the strategies in Sections~\ref{sec.first} and \ref{sec.second}, which are much less conservative.

%*********
% Section
%*********
\section{Conclusion}\label{sec.conclusion}

We have presented concise complexity analyses of some trust region algorithms. In particular, by choosing the radius to be of the same size as the norm of the gradient, we were able to prove  a complexity result (Theorem~\ref{thm-complexity}) related to first-order stationarity.  For this case, although our method is a special case of the general framework analyzed in~\cite{FanYuan01,grapiglia2015convergence,toint2013nonlinear}, our analysis is simple and highlights the essential aspects needed to obtain the complexity result. Next,
we proposed a new update strategy for the trust region radius that allowed us to to obtain  a complexity result (Theorem~\ref{thm-complexity.2}) for second-order stationarity.  Finally, for comparison purposes, we presented a concise analysis of a trust region method with a fixed radius.

It is unclear how to establish similar complexity results when a traditional trust region radius update is used. For the first-order case, the reason is that, following a successful iteration, it is possible that $\|g_{k+1}\|$ may become too large relative to the trust region radius $\delta_{k+1}$; traditional updating schemes do not appropriately handle this possibility, whereas Update~\ref{up.1} does. For the second-order case, it is similarly possible that $\max\{\|g_{k+1}\|,|(\lambda_{k+1})_-|\}$ may become too large relative to $\delta_{k+1}$;  traditional updating schemes do not account for this, but Update~\ref{up.2} does.

It follows from both Theorem~\ref{thm-complexity} and Theorem~\ref{thm-complexity.2} that $\lim_{k\to\infty} \|g_k\| = 0$. This analysis 
contrasts that of trust region methods that use a traditional radius update strategy, whereby first a liminf result is proved, which is then used to establish the limit result. In the case of Theorem~\ref{thm-complexity}, in which case Update~\ref{up.1} is used, this can be explained by noting that the decrease in the objective function during \emph{all} successful iterations  is proportional to  $\|g_k\|^2$ (cf. proof of Lemma~\ref{lem.bound-Seps}),
which is not always true when a traditional radius update is used.
In the case of Theorem~\ref{thm-complexity.2}, in which case Update~\ref{up.2} is used, this can be explained by noting  that the decrease in the objective function during \emph{all} successful iterations is proportional to 
$\max\{\|g_k\|^2,|(\lambda_k)_-|^3\}$
%\max\{|(\lambda_k)_-|,\|g_k\|\}^3, \max\{|(\lambda_k)_-|,\|g_k\|\}^2
%\}$
(cf. proof of Theorem~\ref{lem.bound-Seps.2}),
which is not always true when a traditional radius update is used.

%**************
% Bibliography
%**************
\bibliographystyle{plain}
\bibliography{tr_simple}

\end{document}